\newcommand{\ignore}[1]{}
\def\reals{{\mathbb R}}
\newcommand{\ftil}{\tilde{f}}
\newcommand{\xbar}{\bar{\mathbf{x}}}
\def\x{\mathbf{x}}
\def\y{\mathbf{y}}
\def\z{\mathbf{z}}
\def\epsilon{\varepsilon}
\newtheorem{Thm}{Theorem}
\newtheorem{theorem}{Theorem}
\newtheorem{lemma}[Thm]{Lemma}
\DeclareMathOperator*{\argmin}{\arg\!\min}
\def\citet{\cite}
\begin{document}
\twocolumn
\date{}
\title{Revisiting the Polyak Step Size}
\author{ Elad Hazan \thanks{Google AI Princeton} \thanks{Princeton University} \and Sham M. Kakade \footnotemark[1] \thanks{University of Washington  } 
}\maketitle

\begin{abstract}
This note revisits the Polyak step size schedule for convex
optimization problems,  proving that a simple variant of it
simultaneously attains near optimal convergence rates for the gradient
descent algorithm, for all  ranges
of strong convexity, smoothness, and Lipschitz parameters, without
a priori knowledge of these parameters. 
\end{abstract}

\section{Introduction}

Scaleable optimization for machine learning is based entirely on first
order gradient methods. Besides the age-old method of stochastic
approximation \cite{robbins1985stochastic}, three accelerated methods
have proved their practical and theoretical significance: Nesterov
acceleration \cite{Nesterov83}, variance reduction \cite{schmidt2017minimizing} and
adaptive learning-rate/regularization \cite{duchi2011adaptive}.

Adaptive choices of step sizes allow optimization algorithms to
accelerate quickly according to the local curvature and smoothness of
the optimization landscape.  However, in theory, there are few
parameter free algorithms, and, in practice, there are many search
heuristics utilized.

Let us examine this question of parameter free, adaptive learning
rates for one of the most standard algorithms, namely the 
gradient descent method: 
\begin{equation}\label{eq:gd}
  \x_{t+1}  =  \x_t - \eta_t \nabla f(\x_t)  \, .
\end{equation}
Although
this class of algorithms is not optimal in all settings (i.e. the
aforementioned accelerations can be applied), it is fundamental, and
we may ask what are optimal known rates along with the optimal step
size choices are for this particular algorithm.  Here,
Table~\ref{table:offline} shows the best known rates for 
gradient descent in the standard
regimes: general convex (non-smooth with bounded sub-gradients);
$\beta$-smooth; $\alpha$-strongly-convex; and
$\beta$-smooth\&$\alpha$-strongly convex (see
\cite{boyd2004convex,bubeck2015convex} for more details). 

From a practical perspective these step size settings are
unfortunately disparate in  various regimes: ranging from
rapidly decaying at $\eta_t=O( \frac{1}{\alpha t})$ to moderately decaying at
$\eta_t=O( \frac{1}{\sqrt{t}})$ to a constant
$\eta_t=\frac{1}{\beta}$ (see \cite{boyd2004convex,bubeck2015convex} for more details). 

{\bf This work:} 
We show that a single (and simple) choice of a step size schedule
gives, simultaneously, the optimal convergence (among the class of
 gradient descent algorithms) in all these regimes, without
knowing these parameters in advance. Perhaps surprisingly, this choice
is that prescribed by \citet{polyak}, who argued that this choice was
optimal for the non-smooth, convex case (marked as ``convex" in
Table~\ref{table:offline}, see also \cite{BoydNotes}).


\begin{table}[t]
\begin{center} 
\begin{tabular}{c|c|c|c|c}
&  convex & $\beta$-smooth & $\alpha$-strongly & $(\alpha,\beta)$-well\\ 
&  &     		& convex			 & conditioned\\ 
 \hline 
  error &$\frac{1}{\sqrt{T}}$ & $\frac{\beta}{T}$
& $\frac{1}{\alpha T}$ & $ e^{-\frac{\beta}{\alpha}T } $\\
   \hline 
step size  &$\frac{1}{\sqrt{T}}$ & $\frac{1}{\beta}$ & $\frac{1}{\alpha
T}$ & $ \frac{1}{\beta}$ \\ 
  \hline
\end{tabular}
\end{center}
\caption{Standard convergence rates of gradient descent in convex
  optimization problems.  Error denotes $f(\x_t) - f(\x^{\star})$  of
  a first order methods as a function of the number of iterations. Step Size is
  the standard learning rate schedule used to obtain this rate. Dependence on
  other parameters, namely the Lipchitz constant and initial distance to the objective, is
  omitted. } \label{table:GD} 
\label{table:offline}
\end{table}%

\section{Convexity Preliminaries}

We  consider the minimization of a continuous convex
function over Euclidean space $f: \reals^d \mapsto \reals$ by an
iterative gradient-based method.  We say that $f$ is $\alpha$-strongly
convex if and only if $\forall \x,\y$: 
$$ f(\y) \geq f(\x) + \nabla f(\x)(\y-\x) + \frac{\alpha}{2} \|\x-\y\|^2 $$
We say that $f$ is $\beta$ smooth if and only if $\forall \x,\y$:
$$ f(\y) \leq f(\x) + \nabla f(\x)(\y-\x) + \frac{\beta}{2} \|\x-\y\|^2. $$
The following notation is used throughout: 
\begin{itemize}
\item
$\x^{\star} = \argmin_{\x \in \reals^d} \left\{ f(\x) \right\}$ - optimum
\item
$h(\x_t) = h_t = f(\x_t) - f(\x^{\star})$ - sub-optimality gap of the iterate
\item
$d_t = \|\x_t - \x^{\star}\| $ - Euclidean distance of the iterate.
\item
$\nabla_t = \nabla f(\x_t)$ - gradient of the iterate.
\item
$ \|\nabla_t\|^2$ denotes squared Euclidean norm.
\end{itemize}

The following are basic properties for $\alpha$-strongly-convex functions and/or $\beta$-smooth functions (proved for completeness in Lemma \ref{lem:elementary_properties}):
\begin{equation}\label{eqn:shalom2}
\frac{\alpha}{2} d_t^2 \leq h_t \leq \frac{\beta}{2} d_t^2 \  \ , \ \  \frac{1}{2\beta} \|\nabla_t\|^2 \leq h_t \leq \frac{1}{2\alpha} \|\nabla_t\|^2 
\end{equation}
and thus, 
$$ \frac{1}{4 \beta^2}  \|\nabla_t\|^2 \leq d_t^2 \leq  \frac{1}{4 \alpha^2}
\|\nabla_t\|^2  \, .$$

The following standard lemma is at the heart of much of the analysis of first
order convex optimization.
\begin{lemma} \label{lemma:shalom1}
The sequence of iterates produced by projected gradient descent
(equation~\ref{eq:gd}) satisfies: 
\begin{equation} \label{eqn:shalom}
d_{t+1}^2 \leq d_t^2 - 2 \eta_t h_t + \eta_t^2 \|\nabla_t\|^2 
\end{equation}
\end{lemma}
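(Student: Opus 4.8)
The plan is to reduce the statement to a single application of the first-order convexity inequality after a one-line expansion of the squared distance. First I would substitute the update rule \eqref{eq:gd}, writing $\x_{t+1} = \x_t - \eta_t \nabla_t$, and expand the squared norm:
$$ d_{t+1}^2 = \|\x_t - \eta_t \nabla_t - \x^{\star}\|^2 = d_t^2 - 2\eta_t \nabla_t^{\tr}(\x_t - \x^{\star}) + \eta_t^2 \|\nabla_t\|^2. $$
This already matches the target inequality in its first and third terms, so the entire content of the lemma is concentrated in controlling the cross term $-2\eta_t \nabla_t^{\tr}(\x_t - \x^{\star})$.

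The key step is to lower bound the inner product $\nabla_t^{\tr}(\x_t - \x^{\star})$ by $h_t$. For this I would invoke only convexity of $f$ (neither smoothness nor strong convexity is needed here): the gradient inequality applied at $\x = \x_t$ with $\y = \x^{\star}$ gives
$$ f(\x^{\star}) \geq f(\x_t) + \nabla_t^{\tr}(\x^{\star} - \x_t), $$
which rearranges to $\nabla_t^{\tr}(\x_t - \x^{\star}) \geq f(\x_t) - f(\x^{\star}) = h_t$. Since $\eta_t \geq 0$, multiplying by $-2\eta_t$ reverses the inequality, and substituting into the expansion above yields exactly $d_{t+1}^2 \leq d_t^2 - 2\eta_t h_t + \eta_t^2 \|\nabla_t\|^2$.

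For the projected variant named in the statement, the only extra ingredient is that Euclidean projection onto a closed convex feasible set containing $\x^{\star}$ is non-expansive; hence replacing $\x_{t+1}$ by its projection can only decrease $\|\x_{t+1} - \x^{\star}\|$, so the bound is preserved. I do not anticipate a genuine obstacle here, as the argument is elementary. The one point that warrants care is conceptual rather than technical: recognizing that \emph{only} the bare convexity inequality (and the sign $\eta_t \geq 0$) is required, so that the resulting recursion \eqref{eqn:shalom} holds uniformly across all the regimes in Table~\ref{table:offline}, which is precisely what makes it the right starting point for a parameter-free step-size analysis.
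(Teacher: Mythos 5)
Your proof is correct and follows essentially the same route as the paper: expand $\|\x_t - \eta_t \nabla_t - \x^{\star}\|^2$ and bound the cross term via the first-order convexity inequality $\nabla_t^\top(\x_t - \x^{\star}) \geq h_t$. You in fact supply details the paper leaves implicit (the explicit gradient inequality, the sign condition $\eta_t \geq 0$, and non-expansiveness of projection for the constrained case), all of which are accurate.
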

\begin{proof}
By algorithm definition we have,
\begin{eqnarray*}
d_{t+1}^2 &=& \|\x_{t+1} - \x^{\star}\|^2\\
& = &   \|  \x_t   - \eta_t \nabla_t - \x^{\star} \|^2  \\
& = &  d_t^2 - 2 \eta_t \nabla_t^\top(\x_t - \x^{\star}) + \eta_t^2 \|\nabla_t\|^2 \\
& \leq&   d_t^2 - 2 \eta_t h_t + \eta_t^2 \|\nabla_t\|^2   \label{eq:convexity} 
\end{eqnarray*}
where we have used properties of 
convexity in the last step.
\end{proof}

\section{Main Results}

\citet{polyak} argued that, in a sense, the optimal step size choice of
$\eta_t$ should decrease the upper bound on $d_{t+1}^2$ as fast as
possible.  This choice is:
\[
\eta_t = \frac{h_t}{\|\nabla_t\|^2}
\]
which leads to a decrease of $d_t^2$ by:
\[
d_{t+1}^2 \leq d_t^2 -  \frac{ h_t^2}{\|\nabla_t\|^2}
\]
Note that this choice utilizes knowledge of $f(\x^{\star})$, since $h_t =
f(\x_t) - f(\x^{\star})$.  

\citet{polyak} showed that this choice was optimal for non-smooth
convex optimization (i.e. for bounded gradients). Our first result
shows that this step size schedule (which knows $f(\x^{\star})$) achieves the
min of the best known bounds in all the standard parameter regimes
(among the class of projected gradient descent algorithms).  Assume $\|\nabla_t\| \leq G$, and define:
\begin{eqnarray*}
  B_T 
      &=&  \min\left\{
  \frac{G d_0}{\sqrt{ T}},
  \frac {2\beta d_0^2}{ T },
  \frac{G^2}{  \alpha  T } ,
   \beta d_0^2\left(1-\frac{\alpha}{2\beta}\right)^T
 \right\}.
\end{eqnarray*}

\begin{theorem} \label{thm:simple}
(GD with the Polyak Step Size) 
Algorithm \ref{alg:basic} attains the following regret bound after $T$ steps: 
\begin{eqnarray*}
f(\xbar) - f(\x^{\star}) \leq  B_T
\end{eqnarray*}
\end{theorem}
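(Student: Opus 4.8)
The plan is to establish each of the four terms inside $B_T$ as a separate upper bound on $\min_{0\le t\le T} h_t$, and then simply take the minimum, since all four hold for the \emph{same} run of the algorithm with the \emph{same} parameter-free step size. The engine for everything is the observation that the Polyak choice $\eta_t = h_t/\|\nabla_t\|^2$ is exactly the minimizer over $\eta$ of the right-hand side of Lemma~\ref{lemma:shalom1}, so plugging it in gives the core one-step inequality $d_{t+1}^2 \le d_t^2 - h_t^2/\|\nabla_t\|^2$. Because this is the pointwise-smallest bound any step size could produce from that lemma, each regime-specific analysis (which normally tunes $\eta$ to the parameters) can be \emph{imported}: the Polyak iterate's distance bound is at least as good at every step.

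For the \textbf{convex} term I would bound $\|\nabla_t\|^2\le G^2$ in the denominator, turning the core inequality into $d_{t+1}^2\le d_t^2 - h_t^2/G^2$; telescoping and using $d_{T}^2\ge 0$ gives $\sum_{t<T} h_t^2 \le G^2 d_0^2$, whence $\min_t h_t^2 \le G^2 d_0^2/T$ and $\min_t h_t \le Gd_0/\sqrt T$. For the \textbf{smooth} term I would instead use the first-order bound $\|\nabla_t\|^2 \le 2\beta h_t$ from (\ref{eqn:shalom2}), which upgrades the per-step decrease to $h_t^2/\|\nabla_t\|^2 \ge h_t/(2\beta)$; telescoping now gives $\sum_{t<T} h_t \le 2\beta d_0^2$ and hence $\min_t h_t = O(\beta d_0^2/T)$.

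The two remaining terms need the curvature lower bound $h_t \ge \tfrac{\alpha}{2} d_t^2$. For the \textbf{strongly convex} term I would feed this into the convex calculation: $d_{t+1}^2 \le d_t^2 - h_t^2/G^2 \le d_t^2 - \tfrac{\alpha^2}{4G^2} d_t^4$. Writing $D_t = d_t^2$, this is the classical recursion $D_{t+1}\le D_t - c D_t^2$ with $c=\alpha^2/(4G^2)$; taking reciprocals yields $1/D_{t+1}\ge 1/D_t + c$ and thus $D_T = O(G^2/(\alpha^2 T))$. For the \textbf{well-conditioned} term I would combine both curvature facts at once: the smooth decrease $d_{t+1}^2 \le d_t^2 - h_t/(2\beta)$ together with $h_t\ge\tfrac\alpha2 d_t^2$ gives the contraction $d_{t+1}^2 \le (1-\Theta(\alpha/\beta))\,d_t^2$, and then $h_T\le\tfrac\beta2 d_T^2$ converts the distance bound to the claimed $\beta d_0^2(1-\alpha/2\beta)^T$ order. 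A convenient intermediate identity here is that, because $\eta_t\|\nabla_t\|^2 = h_t$, substituting the Polyak step into the strong-convexity-sharpened descent inequality collapses to $d_{t+1}^2 \le (1-\alpha\eta_t)d_t^2 - \eta_t h_t$, from which both the contraction factor and the boundedness $\eta_t\le 1/(2\alpha)$ of the step read off cleanly.

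The step I expect to be the main obstacle is converting the strongly convex \emph{distance} rate $D_T=O(G^2/(\alpha^2T))$ into the advertised \emph{function-value} rate $O(G^2/(\alpha T))$ without an upper smoothness bound in hand: the only generic inequality, $h_t\le\|\nabla_t\| d_t \le G d_t$, loses a square root and would give $O(1/\sqrt T)$, while routing through a telescope of $\log(D_0/D_T)$ threatens a spurious $\log T$ factor. Avoiding this requires the sharper recursion above together with a weighted telescoping of $h_t$ (with weights growing in $t$), which is exactly where the care lies; it is also why the bound is stated for $\min_t h_t$ rather than a single iterate. Once all four bounds on $\min_{0\le t\le T} h_t$ are in place, the theorem follows by taking their minimum, noting that the step size $\eta_t=h_t/\|\nabla_t\|^2$ used throughout never references $\alpha$, $\beta$, or $G$.
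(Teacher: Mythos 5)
Your plan has the same skeleton as the paper's proof: the one-step inequality $d_{t+1}^2 \le d_t^2 - h_t^2/\|\nabla_t\|^2$ obtained by substituting the Polyak step into Lemma~\ref{lemma:shalom1}, followed by a four-case analysis in which one substitutes $\|\nabla_t\|\le G$ (convex term), $\|\nabla_t\|^2\le 2\beta h_t$ (smooth term), and $h_t\ge\frac{\alpha}{2}d_t^2$ combined with the smooth decrease (well-conditioned term), finally taking the minimum of the four resulting bounds. Three of your four cases are complete and correct as written; constant factors drift in places, but the paper's own constants are loose there too (its lemma yields $2\beta d_0^2/T$ against the $\beta d_0^2/T$ stated in $B_T$), so that is not the issue.

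The genuine gap is exactly the step you flag as ``the main obstacle,'' which is also the only place in the paper's proof where something nontrivial happens: converting the strongly convex distance rate $d_t^2 = O\bigl(G^2/(\alpha^2 t)\bigr)$ into the function-value rate $G^2/(\alpha T)$. You defer this to ``a weighted telescoping of $h_t$ with weights growing in $t$,'' which is not yet an argument, and its most natural reading fails: weighting the telescoping inequality $h_t^2/G^2 \le d_t^2 - d_{t+1}^2$ by $w_t = t$ and Abel-summing leaves $\sum_t d_t^2 = O\bigl((G^2/\alpha^2)\log T\bigr)$ on the right-hand side, hence $\min_t h_t = O\bigl(G^2\sqrt{\log T}/(\alpha T)\bigr)$ --- precisely the spurious logarithm you were trying to avoid. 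The paper's resolution is the $0/1$-weight version of your idea: sum $h_t^2/G^2 \le d_t^2 - d_{t+1}^2$ only over the second half of the iterations, $T/2 \le t \le T$. The telescope then collapses to $d_{T/2}^2 - d_{T+1}^2 \le d_{T/2}^2 = O\bigl(G^2/(\alpha^2 T)\bigr)$ by the distance rate applied at the midpoint, so the average of $h_t^2$ over these $T/2$ iterations is $O\bigl(G^4/(\alpha^2 T^2)\bigr)$, and taking the minimum and then the square root gives $\min_t h_t = O\bigl(G^2/(\alpha T)\bigr)$ with no logarithm. (Quadratic weights $w_t = t^2$ would also work; linear ones do not.) With this one substitution your outline becomes a complete proof, essentially identical to the paper's.
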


\begin{algorithm}[t!]
\caption{GD with the Polyak stepsize}
\label{alg:basic}
\begin{algorithmic}[1]
\STATE Input: time horizon $T$, $x_0$
\FOR{$t = 0, \ldots, T-1$}
\STATE Set $\eta_t =  \frac{h_t}{\|\nabla_t\|^2} $
\STATE  $ \x_{t+1}  =   \x_t - \eta_t \nabla_t $
\ENDFOR
\STATE Return $\xbar=\x_{t^\star}$ where $t^\star = \argmin_{t < T} \{ f(\x_t) \}$.
\end{algorithmic}
\end{algorithm}

Without knowledge of the optimal function value $f(\x^{\star})$, our second main result
shows that all we need is a lower bound $\ftil_0 \leq f(\x^{\star})$, and we
can do nearly as well as the exact Polyak step size method (up to a
$\log$ factor in $f(\x^{\star})-\ftil_0$).
Note that it is often the case that $\ftil_0 =0$ is a valid lower
bound (e.g. in empirical risk  minimization settings). 

\begin{theorem}\label{thm:simple2}
(The Adaptive Polyak Step Size)   Assume a lower
bound $\ftil_0 \leq  f(\x^{\star})$; that 
$K = 1+\lceil 2 \log \frac{f(\x^{\star})-\ftil_0}{B_T}\rceil$.
Algorithm~\ref{alg:full} returns an $\xbar$ such that:
\[
f(\xbar) - f(\x^{\star}) \leq 2 B_T
\]
Furthermore, the number of gradient descent updates made by the algorithm is at most 
$T \cdot (1+\lceil 2 \log \frac{f(\x^{\star})-\ftil_0}{B_T}\rceil)$.
\end{theorem}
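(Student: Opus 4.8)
The plan is to analyze a ``guessed'' variant of the Polyak step and wrap it in a doubling search over the unknown gap to the optimum. Fix a running lower bound $\ftil \le f(\x^{\star})$ and write $\delta = f(\x^{\star}) - \ftil \ge 0$ for the (unknown) gap. The algorithm can only form the surrogate $\htil_t = f(\x_t) - \ftil = h_t + \delta$, so it runs the step $\eta_t = \htil_t/\|\nabla_t\|^2$. Substituting this into Lemma~\ref{lemma:shalom1} and using $\htil_t = h_t+\delta$ gives the basic progress inequality
\[ d_{t+1}^2 \;\le\; d_t^2 - \frac{\htil_t(2h_t-\htil_t)}{\|\nabla_t\|^2} \;=\; d_t^2 - \frac{h_t^2-\delta^2}{\|\nabla_t\|^2}, \]
which is the engine of the whole argument: it reproduces the exact Polyak recursion whenever $h_t$ dominates $\delta$, and stalls only once $h_t$ has shrunk to the resolution $\delta$ of the current guess.

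First I would establish a per-epoch dichotomy. Run the guessed step for $T$ iterations with a fixed $\ftil$. If every iterate satisfies $h_t > 2\delta$, then $h_t^2-\delta^2 \ge \tfrac34 h_t^2$, so the recursion above matches the exact-Polyak recursion $d_{t+1}^2 \le d_t^2 - h_t^2/\|\nabla_t\|^2$ up to the constant $\tfrac34$; replaying the analysis behind Theorem~\ref{thm:simple} (with the epoch's starting distance, which is at most $d_0$ since the distances are non-increasing) then yields $\min_t h_t \le 2 B_T$, and in particular forces $\delta < B_T$. Otherwise some iterate has $h_t \le 2\delta$, and then the observable quantity $\min_t \htil_t = \min_t h_t + \delta$ lies in the interval $[\delta, 3\delta]$, which is exactly what is needed to refine the guess.

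Next I would close the loop with a geometric refinement. In the second branch set $\ftil \leftarrow \ftil + \tfrac13 \min_t \htil_t$; because $\min_t \htil_t \in [\delta,3\delta]$ the increment is at most $\delta$, so the updated guess stays a valid lower bound, while the new gap is at most $\tfrac23\delta$. Consequently, as long as $\delta \ge B_T$ the first branch is impossible and every epoch contracts the gap by a constant factor, so after $K = \log\frac{f(\x^{\star})-\ftil_0}{B_T}$ epochs the gap falls below $B_T$. In that final epoch either branch of the dichotomy certifies $\min_t h_t \le 2B_T$, and returning the best iterate seen across all epochs yields $f(\xbar)-f(\x^{\star}) \le 2B_T$. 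Each epoch costs $T$ updates, for the claimed total of $T\cdot K$.

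The main obstacle is the refinement step: it must use only the observable surrogate $\htil_t$ (never $f(\x^{\star})$, $\alpha$, $\beta$, or $G$) yet simultaneously keep $\ftil \le f(\x^{\star})$, guarantee a constant-factor contraction of the gap, and behave correctly at the transition epoch where the gap first drops below $B_T$. The technical heart is therefore the two-sided estimate $\min_t \htil_t \in [\delta,3\delta]$ together with the bookkeeping that the exact-Polyak branch can occur only once $\delta<B_T$; the remaining work is checking that the constant lost through the $\tfrac34$-factor (and across each of the four regimes of $B_T$) stays within the stated factor of $2$.
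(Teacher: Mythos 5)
Your proposal follows the same architecture as the paper's proof: run surrogate Polyak steps for $T$ iterations against the current lower bound, establish a per-epoch dichotomy (either the epoch reproduces the exact Polyak recursion up to a constant $\gamma$, so Lemma~\ref{lemma:shalom2} gives $O(B_T)$ error, or some iterate certifies that the lower bound can be tightened by a constant factor of the gap), and iterate for logarithmically many epochs, returning the best point seen. The knobs differ: the paper's Algorithm~\ref{alg:basic2} uses \emph{half} the surrogate step, $\eta_t = \frac{f(\x_t)-\ftil}{2\|\nabla_t\|^2}$, with the midpoint refinement $\ftil_+ = \frac{f(\xbar)+\ftil}{2}$, which halves the gap exactly and gives $\gamma = 1/2$; you use the full surrogate step with the additive refinement $\ftil + \frac{1}{3}\min_t \htil_t$, which contracts the gap only by $2/3$ (so you need $\log_{3/2}$ rather than $\log_2$ epochs, exceeding the stated update count by a constant factor), and your $\gamma = 3/4$ branch gives $\frac{8}{3}B_T$ rather than $2B_T$ in the smooth regime. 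I would not press hard on these constants, since the paper itself is cavalier about them (its claim $R_{T,\frac12}=B_T$ is also only true up to constant factors).

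There is, however, one genuine gap: your assertion that ``the distances are non-increasing,'' which you use to argue that every epoch starts within distance $d_0$ of $\x^{\star}$. This is false for the full surrogate step. Your own inequality $d_{t+1}^2 \le d_t^2 - \frac{h_t^2-\delta^2}{\|\nabla_t\|^2}$ loses all sign control once $h_t < \delta$, and the overshoot is real, not an artifact of the bound: for $f(x)=x^2/2$, an iterate at $x_t=0.1$ with gap $\delta=1$ is mapped to $x_{t+1} = \frac{x_t^2/2-\delta}{x_t} \approx -10$, and for smooth $f$ the catapult can be made arbitrarily large since $\|\nabla_t\|\to 0$ near the optimum. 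So a refinement (branch-(b)) epoch can leave the iterate far beyond $d_0$, after which both of your key claims fail for subsequent epochs: the claim that branch (a) is impossible while $\delta \ge B_T$ (it rests on the branch-(a) error bound, which scales with the epoch's \emph{starting} distance) and the final-epoch certificate $\min_t h_t \le 2B_T$. The fix is precisely the design choice the paper makes and explicitly flags: Algorithm~\ref{alg:full} calls the subroutine from the \emph{same} $x_0$ in every epoch, so each epoch's analysis is decoupled from the previous trajectories and no monotonicity is needed. Relatedly, you should make explicit that the algorithm cannot observe which branch occurred and therefore applies your refinement after every epoch; after a branch-(a) epoch the refined $\ftil$ can exceed $f(\x^{\star})$ (in branch (a) one has $\frac13\min_t\htil_t > \delta$). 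Your argument survives this only because you return the best iterate over \emph{all} epochs and only need valid lower bounds up to the first epoch in which branch (a) occurs --- the same bookkeeping the paper compresses into ``we can suppose that condition 1 does not occur'' --- but as written this step is implicit.
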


In other words, this algorithms makes at most
$O(T \cdot \log \frac{f(\x^{\star})-\ftil_0}{B_T})$ gradient updates to get $B_T$
error,
while the exact Polyak stepsize uses $T$ updates (to obtain $B_T$
error).  The subtlety in the construction is that even with a initial
lower bound on $\ftil_0$, the values $f(\x_t)$ are only upper
bounds. However, Algorithm~\ref{alg:full} and its proof shows how
either the lower bound can be refined or, if not, the algorithm will
succeed. Note that Algorithm~\ref{alg:full} always call the
subroutine Algorithm~\ref{alg:basic2} starting at the same $x_0$.
 
\subsection{Analysis: the exact case}

\begin{algorithm}[t!]
\caption{GD with a lower bound}
\label{alg:basic2}
\begin{algorithmic}[1]
\STATE Input: time horizon $T$, $\x_0$, lower bound $\ftil \leq  f(\x^{\star})$. 
\FOR{$t = 0, \ldots, T-1$}
\STATE Set $\eta_t =  \frac{f(\x_t) - \ftil}{2 \|\nabla_t\|^2} $
\STATE  $ \x_{t+1}  =  \x_t - \eta_t \nabla_t $
\ENDFOR
\STATE Return $\xbar=\x_{t^\star}$ where $t^\star = \argmin_{t < T} \{ f(\x_t) \}$.
\end{algorithmic}
\end{algorithm}

\begin{algorithm}[t!]
\caption{Adaptive Polyak}
\label{alg:full}
\begin{algorithmic}[1]
\STATE Input: time horizon $T$, number of epochs $K$, $\x_0$, value $\ftil_0 \leq  f(\x^{\star})$. 
\FOR{epoch $k = 0, \ldots, K-1$}
\STATE Let $\bar{\x}_k$ be the output of Algorithm \ref{alg:basic2} using input $\x_0,T,\tilde{f}_k$.
\STATE Update $\tilde{f}_{k+1} \leftarrow
\frac{f(\xbar_k)+\tilde{f_k}}{2}$
\ENDFOR
\STATE Return $\xbar_{k^\star}$ where $k^\star = \argmin_{k < K} \{ f(\xbar_k) \}$.
\end{algorithmic}
\end{algorithm}

Theorem~\ref{thm:simple} directly follows from the following lemma. It
is helpful for us to state this lemma in a more general form,
where, for $0\leq \gamma \leq 1 $,  we define $R_{T,\gamma}$ as follows:
\[
  R_{T,\gamma} = \min\left\{
  \frac{G d_0}{\sqrt{\gamma T}},
  \frac {2 \beta d_0^2}{\gamma T },
  \frac{ G^2}{{\gamma}  \alpha  T } ,
   \beta d_0^2\left(1-\gamma\frac{\alpha}{\beta}\right)^T
 \right\}
.
\]

\begin{lemma} \label{lemma:shalom2}
For $0\leq \gamma \leq 1 $, suppose that a sequence $\x_0,
\ldots \x_t$ satisfies:
\begin{equation} \label{eqn:shalom3}
d_{t+1}^2 \leq d_t^2 -  \gamma \frac{ h_t^2}{\|\nabla_t\|^2}
\end{equation}
then for $\xbar=\x_{t^\star}$, where $t^\star = \argmin_{t < T} \{ f(\x_t) \}$,
\[
h(\xbar)  \leq   R_{T,\gamma}\, .
\]
\end{lemma}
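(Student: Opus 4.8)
The plan is to prove the four bounds comprising $R_{T,\gamma}$ separately, since $R_{T,\gamma}$ is their minimum, and each bound uses only the hypothesis \eqref{eqn:shalom3} together with the appropriate elementary property from \eqref{eqn:shalom2}. The common starting point is to sum \eqref{eqn:shalom3} over $t=0,\dots,T-1$ and drop the nonnegative term $d_T^2$, yielding the master inequality $\gamma\sum_{t=0}^{T-1}\frac{h_t^2}{\|\nabla_t\|^2}\le d_0^2$; I also record that \eqref{eqn:shalom3} forces $d_t^2$ to be non-increasing. Throughout I write $\bar h = \min_{0\le t\le T}h_t = h(\xbar)$ and $a_t = d_t^2$.

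For the general convex term I bound each summand from below using $\|\nabla_t\|\le G$ and $h_t\ge\bar h$ by $\bar h^2/G^2$; the master inequality then gives $\gamma T\,\bar h^2/G^2\le d_0^2$, i.e.\ $\bar h\le Gd_0/\sqrt{\gamma T}$. For the $\beta$-smooth term I use $\|\nabla_t\|^2\le 2\beta h_t$ (the left inequality of the second part of \eqref{eqn:shalom2}) to replace each summand by $h_t/(2\beta)\ge\bar h/(2\beta)$, so the master inequality becomes $\gamma T\,\bar h/(2\beta)\le d_0^2$, i.e.\ $\bar h\le 2\beta d_0^2/(\gamma T)$. Both arguments are immediate once the correct per-term lower bound is inserted.

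For the well-conditioned term I would not telescope the sum but instead extract a per-step contraction. Chaining smoothness ($\|\nabla_t\|^2\le 2\beta h_t$) and strong convexity ($h_t\ge\frac\alpha2 a_t$) gives $\frac{h_t^2}{\|\nabla_t\|^2}\ge\frac{h_t}{2\beta}\ge\frac{\alpha}{4\beta}a_t$, so \eqref{eqn:shalom3} yields $a_{t+1}\le\bigl(1-\Theta(\gamma\alpha/\beta)\bigr)a_t$; telescoping produces geometric decay of $a_T$, and converting with smoothness via $h_T\le\frac\beta2 a_T$ gives the claimed $\beta d_0^2(1-\gamma\alpha/\beta)^T$ form (up to the precise constant in the contraction factor).

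The strongly convex term is where the real work lies, because the naive telescoping above only ever yields a $1/\sqrt T$ rate on the function value. Here I combine $\|\nabla_t\|\le G$ with $h_t\ge\frac\alpha2 a_t$ to turn \eqref{eqn:shalom3} into the quadratic recursion $a_{t+1}\le a_t-\frac{\gamma\alpha^2}{4G^2}a_t^2$; inverting this (via $\frac{1}{1-x}\ge 1+x$) gives $\frac{1}{a_{t+1}}\ge\frac{1}{a_t}+\frac{\gamma\alpha^2}{4G^2}$, which telescopes to the $1/t$ decay $a_t\le\frac{4G^2}{\gamma\alpha^2 t}$. The obstacle is that passing from this bound on $a_t$ to a $1/T$ bound on $\min_t h_t$ cannot be done through $h_t\le Gd_t$ (which costs a square root) nor through smoothness (which is unavailable in this regime). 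The fix I plan is a windowed average: restricting the master telescoping to $t\in[T/2,T)$ gives $\sum_{t\ge T/2}h_t^2\le\frac{G^2}{\gamma}(a_{T/2}-a_T)\le\frac{G^2}{\gamma}a_{T/2}=O(1/T)$, and since this window still contains $T/2$ terms, the minimum of $h_t^2$ over it is $O(1/T^2)$, hence $\min_t h_t=O(G^2/(\gamma\alpha T))$ after taking the root. This step exploits simultaneously that the tail iterates have small $a_t$ and that there are still $\Theta(T)$ of them, and I expect verifying this windowing argument, rather than any of the other three cases, to be the crux.
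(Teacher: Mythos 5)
Your proposal is correct and follows essentially the same route as the paper: the same four-case decomposition driven by the elementary inequalities in \eqref{eqn:shalom2}, and---crucially---the same second-half windowed telescoping for the strongly convex case, which is indeed where the paper also concentrates its effort. The only deviations are cosmetic (you obtain the $O(1/t)$ decay of $d_t^2$ by inverting the quadratic recursion, $1/a_{t+1} \ge 1/a_t + c$, where the paper instead proves $a_t \le \tfrac{1}{t+1}$ by induction on $a_{t+1}\le a_t(1-a_t)$; and in the general convex case you lower-bound by the minimum where the paper uses Cauchy--Schwarz), and your constants, including the contraction factor you flag in the well-conditioned case, are off from the stated ones only to the extent that the paper's own are.
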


\begin{proof}
The proof analyzes different cases:
\begin{enumerate}
\item
For convex functions with gradient bound $G$, 
\begin{eqnarray*}
d_{t+1}^2 -  d_t^2 & \leq - \frac{\gamma h_t^2}{\|\nabla_t\|^2} \leq -
                     \frac{\gamma h_t^2}{G^2}  
\end{eqnarray*}
Summing up over $T$ iterations, and using Cauchy-Schwartz, we have
\begin{eqnarray*}
\frac{1}{T} \sum_t h_t 
& \leq&  \frac{1}{\sqrt{T}} \sqrt{\sum_t h_t^2} \\
& \leq& \frac{ G}{\sqrt{\gamma T}} \sqrt{\sum_t (d_{t}^2 - d_{t+1}^2)} \leq
\frac{ G d_0 }{\sqrt{\gamma T}} \, .
\end{eqnarray*}

\item
For smooth functions, equation~\eqref{eqn:shalom2} implies:
\[
d_{t+1}^2 - d_t^2 \leq - \frac{\gamma h_t^2}{\|\nabla_t\|^2} \leq -
\frac{\gamma h_t}{2 \beta} \, .
\]
This implies
\[
\frac{1}{T} \sum_t h_t  \leq \frac{2 \beta d_0^2}{\gamma T}\, .
\]

\item
For strongly convex functions, equation~\eqref{eqn:shalom2} implies:
\[ d_{t+1}^2 - d_t^2
  \leq - \gamma \frac{h_t^2}{\|\nabla_t\|^2}
  \leq - \gamma \frac{h_t^2}{G^2}
  \leq  - \gamma  \frac{\alpha^2 d_t^4 }{4 G^2} \, .
\]
In other words,
$d_{t+1}^2  \leq  d_t^2 ( 1- \gamma \frac{\alpha^2 d_t^2}{4 G^2} ) \, .$ 
Defining $a_t :={\gamma}\frac{4 \alpha^2 d_t^2}{G^2}$, we have:
\[
a_{t+1}  \leq  a_t (1-a_t) \, .
\]
This implies that $a_t \leq \frac{1}{t+1}$, which can be seen by
induction\footnote{That $a_0\leq 1$ follows from equation
  \eqref{eqn:shalom2}. For $t=1$, $a_1\leq \frac{1}{2}$ since
  $a_1  \leq  a_0 (1-a_0)$ and $0\leq a_0\leq 1$.
  For the induction step, $
  a_t  \leq  a_{t-1} (1-a_{t-1}) \leq
  \frac{1}{t}(1-\frac{1}{t})
  =\frac{t-1}{t^2}=\frac{1}{t+1}(\frac{t^2-1}{t^2})
  \leq \frac{1}{t+1}$.}. The proof is completed as follows\footnote{This assumes $T$ is even. $T$ odd
    leads to the same constants.} :  
\begin{eqnarray*}
\frac{1}{ T/2 } \sum_{t= T/2 }^T h_t^2 &
\leq& \frac{2G^2}{\gamma  T  }\sum_{t= T/2 }^T ( d_t^2 -
                                   d_{t+1}^2)  \\
  &=&\frac{2 G^2}{\gamma  T } ( d _{ T/2 }^2 - d_T^2)  \\
  &=&\frac{G^4}{2 \gamma^2 \alpha^2   T} ( a
    _{ T/2 } - a_T)  \\ 
   & \leq &\frac{ G^4}{\gamma^2 \alpha^2 T ^2} 
  \, .
\end{eqnarray*}
Thus, there exists a $t$ for which $h_t^2 \leq \frac{ G^4}{\gamma^2 \alpha^2
   T^2} $. Taking the square root completes the claim.

\item
For both strongly convex and smooth: 
\[ d_{t+1}^2 - d_t^2 \leq - \gamma \frac{h_t^2}{\|\nabla_t\|^2} \leq
 - \frac{\gamma h_t}{2 \beta} \leq
  - \gamma \frac{\alpha}{\beta} d_t^2
  \]
  Thus,
  \[
    h_{T} \leq \beta d_{T}^2 \leq \beta d_0^2
    \left(1-\gamma\frac{\alpha}{\beta}\right)^T\, .
    \]
  \end{enumerate}
This completes the proof of all cases.
\end{proof}

\subsection{Analysis: the adaptive case}


The proof of Theorem~\ref{thm:simple2} rests on the following lemma
which shows that, given  a lower bound on the objective, the
subroutine in Algorithm \ref{alg:full} either returns a near-optimal
point with desired precision or a tighter lower bound.

\begin{lemma} \label{lemma:shalom4}
Assume $\|\nabla_t\| \leq G$. 
With input $T$, $\x_0$, and $\ftil$ where $\ftil \leq  f(\x^{\star})$, Algorithm \ref{alg:basic2} returns a point $\xbar$ such that one of the following holds:
\begin{enumerate}
    \item $ h(\xbar) \leq R_{T,\frac{1}{2}} $
    \item For $\tilde{f}_+ := \frac{f(\xbar)+\tilde{f}}{2}$,
      \[ 0 \leq f(\x^{\star}) - \tilde{f}_+  \leq \frac{f(\x^{\star}) - \ftil}{2}  \]
\end{enumerate}
\end{lemma}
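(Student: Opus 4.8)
The plan is to substitute the step size of Algorithm~\ref{alg:basic2} into the one-step inequality of Lemma~\ref{lemma:shalom1} and read off a dichotomy according to how the (unknown) suboptimality $h_t$ compares with the slack in the lower bound. Throughout write $\Delta = f(\x^{\star}) - \ftil \geq 0$ for the gap between the true optimum and the supplied lower bound, and $g_t = f(\x_t) - \ftil = h_t + \Delta$ for the quantity the algorithm actually uses, so that $\eta_t = g_t/(2\|\nabla_t\|^2)$ and $g_t \geq h_t \geq 0$ precisely because $\ftil$ is a valid lower bound.

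First I would plug $\eta_t$ into \eqref{eqn:shalom}. The cross term $-2\eta_t h_t$ and the quadratic term $\eta_t^2\|\nabla_t\|^2$ combine into
\[
d_{t+1}^2 \leq d_t^2 - \frac{g_t(4h_t - g_t)}{4\|\nabla_t\|^2} = d_t^2 - \frac{g_t(3h_t - \Delta)}{4\|\nabla_t\|^2},
\]
using $4h_t - g_t = 3h_t - \Delta$. The key observation is that whenever $h_t \geq \Delta$ this decrease is at least the Polyak decrease with constant $\gamma = \tfrac12$: indeed $3h_t - \Delta \geq 2h_t$ and $g_t \geq h_t$, so the right-hand side is at most $d_t^2 - \tfrac12 h_t^2/\|\nabla_t\|^2$, which is exactly \eqref{eqn:shalom3} with $\gamma = \tfrac12$. (A slightly sharper threshold $h_t \geq (\sqrt 2 - 1)\Delta$ also suffices, but $h_t \geq \Delta$ keeps the bookkeeping clean.)

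Next I would split into the two alternatives of the statement. In the first case, if $h_t \geq \Delta$ holds for every step $t = 0,\dots,T-1$, then the recursion \eqref{eqn:shalom3} with $\gamma = \tfrac12$ holds along the whole run, and Lemma~\ref{lemma:shalom2} immediately yields $h(\xbar) \leq R_{T,1/2}$, which is conclusion~1. In the second case there is an index $t^\star$ with $h_{t^\star} < \Delta$, and since $\xbar$ is the best iterate we get $h(\xbar) \leq h_{t^\star} < \Delta$. It then remains to verify that the midpoint $\tilde f_+$ of $\ftil$ and $f(\xbar)$ is a strictly improved lower bound, i.e.\ $0 \leq f(\x^{\star}) - \tilde f_+ \leq \Delta/2$. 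The upper bound follows from $f(\xbar) \geq f(\x^{\star})$, which places $\tilde f_+$ at least as high as the midpoint of $\ftil$ and $f(\x^{\star})$; and the nonnegativity $\tilde f_+ \leq f(\x^{\star})$ is exactly a rearrangement of the inequality $h(\xbar) \leq \Delta$ just established. Both are one-line manipulations.

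The step I expect to be the crux is conceptual rather than computational: recognizing that the \emph{failure} of the clean $\gamma = \tfrac12$ recursion is itself the useful event. The recursion can only break when $h_t < \Delta$, and that very inequality certifies that the best iterate has crossed halfway to the optimum, which is precisely what licenses halving the gap $f(\x^{\star}) - \ftil$. Care is needed only in tracking that $\ftil \leq f(\x^{\star})$ is invoked in two places—to guarantee $g_t \geq h_t$ in the first case and $\tilde f_+ \leq f(\x^{\star})$ in the second—and that the recursion is required only for $t = 0,\dots,T-1$, so that the dichotomy on the event $\{h_t < \Delta\}$ is genuinely exhaustive.
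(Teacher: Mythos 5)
Your proof is correct and is essentially the paper's own argument: your dichotomy $h_t \geq \Delta$ versus $h_t < \Delta$ is exactly the paper's condition \eqref{eq:condition}, $\eta_t \leq h_t/\|\nabla_t\|^2$ (equivalent, since $\eta_t = (h_t+\Delta)/(2\|\nabla_t\|^2)$), and both proofs then apply Lemma~\ref{lemma:shalom2} with $\gamma = \tfrac12$ in the first branch and the same midpoint rearrangement to halve the gap in the second. The only discrepancy is that you (rightly) read $\tilde{f}_+$ as the midpoint $\frac{f(\xbar)+\tilde{f}}{2}$; the minus sign in the statement's definition is a typo that the paper's own proof also silently corrects.
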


\begin{proof}
Due to that $\tilde f$ is a lower bound,  we have that 
  \[
\eta_t =\frac{f(\x_t) - \tilde f}{2 ||\nabla_t||^2} \geq \frac{h_t}{2||\nabla_t||^2} \, .
\]
We will consider two cases. First, suppose that
\begin{equation}\label{eq:condition}
 \eta_t \leq \frac{h_t}{\|\nabla_t\|^2}
\end{equation}
held for $T$ steps. For this case, by Lemma~\ref{lemma:shalom1},
\begin{eqnarray*}
  d_{t+1}^2 &\leq &d_t^2 - 2 \eta_t h_t + \eta_t^2 \|\nabla_t\|^2\\
            &\leq &d_t^2 - 2 \eta_t h_t + \eta_t h_t\\
            &= &d_t^2 -  \eta_t h_t \\
              & \leq &d_t^2 -  \frac{h_t^2}{2||\nabla_t||^2} 
\end{eqnarray*}
using the assumed upper bound on $\eta_t$ in the second step and the
lower bound in the last step.
By Lemma~\ref{lemma:shalom2}, we can take $\gamma=1/2$ and we have
that  $\min_{t< T} h_t \leq R_{T,\frac{1}{2}}$.

Now suppose there exists a time $t^*$ where Equation~\ref{eq:condition}
fails to hold. Hence, for some iteration, 
\[
  \eta_{t^*} =
  \frac{f(\x_{t^*}) - \tilde f}{2 ||\nabla_{t^*}||^2} \geq \frac{f(\x_{t^*}) - f(\x^{\star})}{ ||\nabla_{t^*}||^2}\,.
\]
After rearranging, we have
\[
 f(\x^{\star}) \geq \frac{f(\x_{t^*}) + \tilde f}{2 } \geq \frac{f(\xbar) +\tilde{f} }{2} =\tilde{f}_+\, .
\]
using the definition of $\xbar$ and the definition of $\tilde{f}_+$.
Hence,
$
  f(\x^{\star})-\tilde{f}_+ 
\geq 0 
$.
In addition, we have 
\begin{eqnarray*}
f(\x^{\star})-\tilde{f}_+ & =& f(\x^{\star}) - \frac{ f(\xbar) + \tilde{f}}{2}  \\
& \leq& f(\x^{\star}) -  \frac{ f(\x^*) + \tilde{f}}{2} \\
&=& \frac{f(\x^{\star})-\tilde f}{2}  
\end{eqnarray*}
which completes the proof.
\end{proof}

Now the proof Theorem~\ref{thm:simple2} follows.

\begin{proof}
  (of Theorem~\ref{thm:simple2})
  Note  $R_{T,\frac{1}{2}}\leq 2B_T$.
Suppose that
$f(\xbar_k) - f(\x^{\star})\geq R_{T,\frac{1}{2}}$ for all $k\leq K-1$, else the proof would be complete. By Lemma~\ref{lemma:shalom4},
we have that $f(\x^{\star}) - \ftil_{k}
  \leq (1/2)^{k} (f(\x^{\star})-\ftil_0)$ and that
 $0\leq f(\x^{\star}) -
    \ftil_k $, for all $k\in\{1, \ldots , K\}$.  
Hence, for $k=K-1=\lceil 2 \log \frac{f(\x^{\star})-\ftil_0}{B_T}\rceil$, we have
 $f(\x^{\star}) - \ftil_{K-1} \leq B_T$. 
   By construction  $\ftil_K=\frac{f(\xbar_{K-1})+\ftil_{K-1}}{2}$, which implies:
    \begin{eqnarray*}
    f(\xbar_{K-1}) &=& 2 \ftil_{K} - \ftil_{K-1}\\
    &\leq& 2 f(\x^{\star})- \ftil_{K-1}\\
&=&f(\x^{\star}) +f(\x^{\star})-  \ftil_{K-1} \\
&\leq& f(\x^{\star})+B_T,
\end{eqnarray*}
which completes the proof.
  \end{proof}

\subsection*{Acknowledgements}
We thank Yair Carmon for pointing out a sign error and for teaching this material.
Elad Hazan acknowledges funding from NSF award Number 1704860. 
Sham Kakade acknowledges funding from the Washington Research Foundation for Innovation in Data-intensive Discovery, the DARPA award FA8650-18-2-7836, and the ONR award N00014-18-1-2247. 

\bibliography{main.bib}
\bibliographystyle{plain}
\appendix

\section{Elementary properties of convex analysis}

\begin{lemma} \label{lem:elementary_properties}
The following properties hold for $\alpha$-strongly-convex functions and/or $\beta$-smooth functions. 
\begin{enumerate}
    \item $\frac{\alpha}{2} d_t^2 \leq h_t$
    \item $ h_t \leq \frac{\beta}{2} d_t^2$
    \item $\frac{1}{2 \beta} \|\nabla_t\|^2 \leq h_t$
    \item $ h_t \leq \frac{1}{2 \alpha} \|\nabla_t\|^2 $
\end{enumerate}
\end{lemma}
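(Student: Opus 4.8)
The plan is to derive each of the four inequalities directly from the two defining variational inequalities --- strong convexity and smoothness --- by specializing the free point to either $\x^{\star}$ or to the explicit minimizer of the relevant quadratic surrogate. The single structural fact driving everything is first-order optimality: since $\x^{\star} = \argmin_{\x} f(\x)$, we have $\nabla f(\x^{\star}) = 0$, and moreover $f(\x^{\star}) \leq f(\y)$ for every $\y \in \reals^d$.

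For items 1 and 2 I would instantiate the defining inequalities at the base point $\x = \x^{\star}$ with test point $\y = \x_t$. Strong convexity then reads $f(\x_t) \geq f(\x^{\star}) + \nabla f(\x^{\star})^\top(\x_t - \x^{\star}) + \frac{\alpha}{2}\|\x_t - \x^{\star}\|^2$; the linear term vanishes because $\nabla f(\x^{\star}) = 0$, leaving $h_t \geq \frac{\alpha}{2} d_t^2$, which is item 1. The identical substitution into the smoothness inequality yields $h_t \leq \frac{\beta}{2} d_t^2$, which is item 2.

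For items 3 and 4 the base point is instead $\x = \x_t$, and the test point $\y$ is left free and then chosen to extremize the quadratic surrogate. For item 3, smoothness gives $f(\y) \leq f(\x_t) + \nabla_t^\top(\y - \x_t) + \frac{\beta}{2}\|\y - \x_t\|^2$ for all $\y$; minimizing the right-hand side (attained at $\y = \x_t - \frac{1}{\beta}\nabla_t$) and using $f(\x^{\star}) \leq f(\y)$ gives $f(\x^{\star}) \leq f(\x_t) - \frac{1}{2\beta}\|\nabla_t\|^2$, i.e.\ item 3. For item 4, strong convexity gives the reverse bound $f(\y) \geq f(\x_t) + \nabla_t^\top(\y - \x_t) + \frac{\alpha}{2}\|\y - \x_t\|^2$; evaluating the right-hand side at its own minimizer $\y = \x_t - \frac{1}{\alpha}\nabla_t$ and using that $f(\x^{\star})$ exceeds this lower bound yields $f(\x^{\star}) \geq f(\x_t) - \frac{1}{2\alpha}\|\nabla_t\|^2$, which rearranges to item 4.

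I expect no real obstacle here, since these are the textbook consequences of the two definitions; the only point requiring care is the direction of the inequalities in items 3 and 4. The subtlety is that item 3 upper-bounds $f$ by a quadratic and then exploits $f(\x^{\star}) \leq \min_{\y}(\text{surrogate})$, whereas item 4 lower-bounds $f$ by a quadratic and exploits $f(\x^{\star}) \geq \min_{\y}(\text{surrogate})$; keeping the surrogate's role (upper versus lower envelope) and the sign produced by completing the square straight is essentially the entire content of the argument.
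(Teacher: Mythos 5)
Your proposal is correct and matches the paper's own argument: items 1 and 2 instantiate the defining inequalities at $\x = \x^{\star}$, $\y = \x_t$ and kill the linear term via $\nabla f(\x^{\star}) = 0$, while items 3 and 4 apply smoothness (resp.\ strong convexity) at base point $\x_t$ with the test point $\x_t - \frac{1}{\beta}\nabla_t$ (resp.\ the quadratic's minimizer $\x_t - \frac{1}{\alpha}\nabla_t$) combined with optimality of $\x^{\star}$. If anything, your explicit choice of test point in item 3 is slightly cleaner than the paper's phrasing, which routes the same computation through the gradient-descent iterate $\x_{t+1}$ with a step size that must implicitly equal $1/\beta$.
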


\begin{proof} 
{\bf Claim 1: $h_t \geq \frac{\alpha}{2} d_t^2 $} 
    
By strong convexity, we have 
\begin{align*}
h_t & =  f(\x_t) - f(\x^{\star}) \\
& \geq  \nabla f_t(\x^{\star}) (\x_t - \x^{\star}) + \frac{\alpha}{2} \|\x_t - \x^{\star}\|^2  \\
 & \geq \frac{\alpha}{2} \|\x_t - \x^{\star}\|^2 ,
\end{align*}
where the last inequality holds by optimality conditions for $\x^{\star}$.
    
{\bf Claim 2: $h_t \leq \beta d_t^2 $}  
    
By smoothness, 
\begin{align*}
h_t & =  f(\x_t) - f(\x^{\star}) \\
& \leq  \nabla f_t(\x^{\star}) (\x_t - \x^{\star}) + \frac{\beta}{2} \|\x_t - \x^{\star}\|^2  \\
 & \leq \frac{\beta}{2} \|\x_t - \x^{\star}\|^2 
\end{align*}
where the last inequality follows since the gradient at the global optimum is zero.

{\bf Claim 3: $h_t \geq \frac{1}{\beta} \|\nabla_t\|^2 $} 

Using smoothness: 
\begin{align*}
h_t =  & f(\x_t) - f(\x^{\star}) \\
& \geq \left\{ f(\x_t) - f(\x_{t+1}) \right\}  \\
 & \geq \left\{  \nabla f_t(\x_t) (\x_{t+1} - \x_t) - \frac{\beta}{2} \|\x_t - \x_{t+1} \|^2 \right\}  \\
 & = \eta \|\nabla_t\|^2  - \frac{\beta}{2} \eta^2 \|\nabla_t\|^2 \\
 & \geq \frac{1}{2\beta} \|\nabla_t\|^2.
\end{align*}

{\bf Claim 4: $h_t \leq \frac{1}{\alpha} \|\nabla_t\|^2 $}  
    
We have for any pair $\x,\y \in \reals^d$:
\begin{align*}
f(\y)  & \ge  f(\x) +   \nabla f(\x)^\top  (\y - \x ) + \frac{\alpha}{2}  \|\x - \y\|^2  \\
&\ge  \min_{\z \in \reals^d } \left\{ f(\x) +   \nabla f(\x)^\top  (\z - \x ) + \frac{\alpha}{2}  \|\x - \z\|^2 \right\} \\
& =  f(\x) - \frac{1}{2  \alpha} \| \nabla f(\x)\|^2. \\
& \text{ by $\z = \x - \frac{1}{ \alpha} \nabla f(\x) $ }
\end{align*}
In particular, taking $\x = \x_t \ , \ \y = \x^\star$, we have
\begin{equation*} 
 h_t =  f(\x_t) - f(\x^\star)  \leq \frac{1}{2 \alpha} \|\nabla_t\|^2  .
\end{equation*}
This completes the proof. \end{proof}

\end{document}